\definecolor{linkred}{rgb}{0.48,0.1,0.05}
\definecolor{linkblue}{RGB}{16, 78, 139}
	\titlespacing{\section}{0pt}{12pt}{0pt}
	\titlespacing{\subsection}{0pt}{6pt}{0pt}
\long\def\@footnotetext#1{%
\H@@footnotetext{%
\ifHy@nesting 
\hyper@@anchor{\@currentHref}{#1}%
\else 
\Hy@raisedlink{\hyper@@anchor{\@currentHref}{\relax}}#1%
\fi 
}}
\def\@footnotemark{%
\leavevmode 
\ifhmode\edef\@x@sf{\the\spacefactor}\nobreak\fi 
\H@refstepcounter{Hfootnote}%
\hyper@makecurrent{Hfootnote}%
\hyper@linkstart{link}{\@currentHref}%
\@makefnmark 
\hyper@linkend 
\ifhmode\spacefactor\@x@sf\fi 
\relax 
}%
\renewcommand*\@footnotemark{%
\leavevmode 
\ifhmode 
\edef\@x@sf{\the\spacefactor}%
\FN@mf@check 
\nobreak 
\fi 
\H@refstepcounter{Hfootnote}%
\hyper@makecurrent{Hfootnote}%
\hyper@linkstart{link}{\@currentHref}%
\@makefnmark 
\hyper@linkend 
\ifFN@pp@towrite 
\FN@pp@writetemp 
\FN@pp@towritefalse 
\fi 
\FN@mf@prepare 
\ifhmode\spacefactor\@x@sf\fi 
\relax%
}%
\theoremstyle{plain}
\newtheorem{theorem}{Theorem}[section]
\newtheorem{lemma}[theorem]{Lemma}
\theoremstyle{definition}
\newtheorem{remark}[theorem]{Remark}
\newcommand{\diam}{\ensuremath{\mathrm{diam}}}
\newcommand{\MF}{\mathcal M \mathcal F}
\newcommand{\MFS}{\mathcal M \mathcal F^s}
\long\def\symbolfootnote[#1]#2{\begingroup%
\def\thefootnote{\fnsymbol{footnote}}\footnote[#1]{#2}\endgroup}
\def\blfootnote{\xdef\@thefnmark{}\@footnotetext}
\begin{document}

{\Large \bfseries \sc Simultaneous flips on triangulated surfaces}

{\bfseries Valentina Disarlo, 
Hugo Parlier\symbolfootnote[1]{\normalsize Research supported by Swiss National Science Foundation grants numbers PP00P2\textunderscore 15302 and PP00P2\textunderscore 128557.\\
{\em 2010 Mathematics Subject Classification:} Primary: 05C25, 30F60, 32G15, 57M50. Secondary: 05C12, 05C60, 30F10, 57M07, 57M60. \\
{\em Key words and phrases:} flip graphs, triangulations of surfaces}}

{\em Abstract.} 
We investigate a type of distance between triangulations on finite type surfaces where one moves between triangulations by performing simultaneous flips. We consider triangulations up to homeomorphism and our main results are upper bounds on distance between triangulations that only depend on the topology of the surface.
\vspace{1cm}

\section{Introduction}\label{sec:intro}

The general theme of defining and measuring distances between triangulations on surfaces plays a role in the study of geometric topology, the geometric group theory perspective of mapping class groups and in combinatorial geometry. 

A usual measure of distance is to consider flip distance where one measures distance by considering the number of flip moves necessary to go from one triangulation to another. Associated to this measure are {\it flip graphs} where vertices are triangulations and there is an edge between vertices if the corresponding triangulations differ by a flip. These graphs appear in a number of contexts, most famously perhaps when the underlying surface is a polygon and in this case the flip graph is the 1-skeleton of a polytope (the associahedron) \cite{Stasheff, Tamari}; these graphs are finite and their diameters are now completely known \cite{Pournin,STT1}. In general, provided the surface has enough topology, flip graphs aren't  finite and are combinatorial models for homeomorphism groups acting on surfaces. A natural finite graph associated to a surface is its {\it modular flip graph} where one considers triangulations up to homeomorphism. This graph (when defined properly and up to a few exceptions) is exactly the quotient of the flip graph by its graph automorphisms (\cite{KP2} and \cite{Disarlo}). 

In this article we consider a natural variant by measuring distance between triangulations by considering the minimal number of {\it simultaneous} flip moves necessary between them. So in this case, provided flips are made on disjoint quadrilaterals, they can be performed simultaneously. Simultaneous flip distance has been studied in the case of plane triangulations \cite{Bose} (note there is slight difference in the definition of  a triangulation) but also finds its roots in related problems in Teichm\"uller theory. A related problem in surface theory is to measure distance between surfaces - and when these surfaces are hyperbolic and have the same topology, these distances and the related metric spaces give rise to the geometric study of Teichm\"uller and moduli spaces. In these spaces, several of the important metrics (namely the Teichm\"uller metric and the Thurston metric) are $\ell^{\infty}$ metrics. The simultaneous flip metric can be thought of as a combinatorial analogue to these metrics. 

Our main goal is to study the diameters of modular flip graphs of finite type orientable surfaces endowed with this distance. In particular we are interested in how these diameters grow in function of the number of punctures and the genus of the underlying surface. There are two possible types of punctured depending on whether we label the punctures or not. This is equivalent to asking whether we consider homeomorphisms on surfaces that permute the punctures. Our methods allow us to show the following.

\begin{theorem}\label{thm:intro}
There exists a constant $U>0$ such that the following holds. Let $\Sigma_{g,n}$ be a surface of genus $g$ with $n$ labelled marked points. Then any two triangulations of $\Sigma_{g,n}$ are related by at most 
$$
U \left( \log(g+n) \right)^2
$$
simultaneous flip moves.
\end{theorem}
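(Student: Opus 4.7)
The plan is to prove the theorem by a divide-and-conquer recursion on the complexity parameter $c = g + n$. The target recurrence is $D(c) \leq D(c/2) + O(\log c)$, which unrolls to $D(c) = O((\log c)^2)$ and yields the desired bound. The strategy therefore splits into two main ingredients: a \emph{straightening lemma} providing the $O(\log c)$ cost per recursion step, and a \emph{balanced cutting curve} providing the $O(\log c)$ levels of recursion.

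The central technical lemma I would establish is the following: given a triangulation $T$ of $\Sigma_{g,n}$ and a disjoint collection of essential simple arcs or curves $\Gamma$ of geometric intersection $i$ with the $1$-skeleton of $T$, one can perform $O(\log i)$ simultaneous flip moves to reach a triangulation in which $\Gamma$ is a union of edges. The idea is that each component of $\Gamma$ crosses a chain of triangles of $T$, producing a sequence of bigons, and alternating bigons along a simple arc sit in disjoint quadrilaterals, so they can be flipped in parallel. A single round then drops the intersection to roughly $3i/4$, and $O(\log i)$ rounds suffice to eliminate it completely.

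With this lemma, the main argument runs as follows. Given two triangulations $T, T'$ of $\Sigma_{g,n}$, choose a simple closed curve or properly embedded arc $\gamma$ appearing as an edge-path in $T'$ that cuts $\Sigma_{g,n}$ into subsurfaces of complexity at most $c/2$ each. Its intersection with the $1$-skeleton of $T$ is $O(c)$, so applying the straightening lemma brings $T$ to a triangulation $T''$ containing $\gamma$ in $O(\log c)$ simultaneous flip moves. Now $T''$ and $T'$ agree along $\gamma$, and the problem reduces to relating their induced triangulations on the two complementary subsurfaces. Since these subsurfaces are disjoint, their required flips can be carried out in parallel, giving $D(c) \leq D(c/2) + O(\log c)$. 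The base case (bounded complexity, or polygonal pieces) is handled by the polygon result of \cite{Bose} or a direct finite argument, and the freedom to work in the modular flip graph allows us to replace $\gamma$ by any curve in its mapping class group orbit.

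The main obstacle is the straightening lemma. Reducing intersection by a constant factor in $O(1)$ simultaneous rounds requires extracting a matching of disjoint quadrilaterals whose flips \emph{all} reduce intersection, and one must rule out configurations where the bigon chain along $\Gamma$ admits no large matching or where one round creates obstructions that cripple the next. A secondary subtlety is the choice of the balanced cutting curve: for every $c$, one needs a curve in a fixed reference triangulation whose complement is topologically balanced and whose intersection with an arbitrary $T$ is $O(c)$. This is most delicate when marked points dominate the genus (or vice versa), and requires treating the ``arc'' and ``closed curve'' cases separately so that both the $n$ and $g$ contributions to $c$ can actually be halved at each recursive step.
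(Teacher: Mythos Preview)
Your recursion $D(c)\le D(c/2)+O(\log c)$ has the same shape as the paper's, but the mechanism for the $O(\log c)$ step is different, and your version has a genuine gap. The straightening lemma you propose---drop $i(\Gamma,T)$ by a constant factor in $O(1)$ simultaneous rounds---is not proved in the paper and is not obviously true: when $\Gamma$ crosses the same edge of $T$ repeatedly, the quadrilaterals you want to flip overlap, and a flip that removes one crossing can create another. You correctly flag this as the main obstacle, but you offer no argument for it, and the paper does not supply one either.

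The paper sidesteps the issue entirely. Its arc-introduction lemma (Lemma~\ref{lem:arcintroduction}) only handles arcs $a$ with $i(a,b)\le 1$ for every $b\in T$; under that hypothesis the triangles $a$ meets assemble into a polygon with $a$ as a diagonal, and the Bose et al.\ polygon bound applies directly. To force this hypothesis, the paper first spends $O(\log\kappa)$ simultaneous flips making a fixed vertex $v_0$ have maximal degree (Lemma~\ref{lem:degreeincrease}); afterwards every triangle touches $v_0$, so any $v_0$-based loop meets each edge at most once. The balanced separating arc is then constructed \emph{from arcs already in $T$}, via the structural Lemmas~\ref{lem:uppergenus1}--\ref{lem:uppergenus2}, which locate arcs of $T$ cutting off half the genus; this also dissolves your worry about bounding $i(\gamma,T)$, since the bound is automatic once $\gamma$ is built relative to $T$ rather than imported from $T'$. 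Finally, the paper does not run a single recursion on $c=g+n$: it proves the $(\log)^2$ bound separately for punctured disks (Section~\ref{sec:spheres}) and one-holed genus-$g$ surfaces (Section~\ref{sec:genus}), and then for general $\Sigma_{g,n}$ introduces one arc, via a spanning-tree trick, that separates all the genus from all the punctures and invokes the two special cases. This completely avoids the balanced-cut difficulty you anticipate when $g$ and $n$ would both need to be halved at each step.
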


In other terms, the above quantity is an upper bound on the diameter $\diam (\MFS(\Sigma_{g,n}))$ of the modular flip graph. We prove the above result in different contexts and with different explicit constants in front of the leading term; although the constants are explicit, we insist on the fact that it is really the order of growth that we've focussed on. 

We point out that we don't know whether the growth rate is optimal; the best lower bounds we know are on the order of $\log(\kappa)$ in terms of either genus or labelled marked points. It does not seem a priori obvious how to fill the gap nor even what to conjecture might be the right rate of growth (see Section \ref{sec:lower}). However in the case of unlabelled marked points, we show that the growth is at most $\log(n)$ in terms of the number of punctures. As in the case of triangulations of planar configurations of points, it is easy to see that one cannot hope for better (see Section \ref{sec:lower}).

{\bf Organization.}

In the next section we introduce the objects we'll be working with and prove two lemmas we'll use throughout the paper. In Section \ref{sec:spheres} we prove the main theorem for punctured spheres and in Section \ref{sec:genus} for genus $g$ surfaces with a single puncture. These results allow us to deduce the general upper bound in Section \ref{sec:hybrid}. In the final section we discuss lower bounds and further questions.\\

{\bf Acknowledgements.}

The second author is very grateful to the mathematics department of Indiana University for its hospitality during a very nice research visit where parts of this article were written. Both authors acknowledge support from U.S. National Science Foundation grants DMS 1107452, 1107263, 1107367 ÒRNMS: Geometric structures And Representation varietiesÓ (the GEAR Network).

\section{Preliminaries}

In our setup $\Sigma$ is a topological orientable connected finite type surface with a  finite set of marked points on it. Its boundary can consist of marked points and possibly
boundary curves, with the additional condition that each boundary curve has at least one marked point on it. Marked points can be \emph{labelled} or \emph{unlabelled}. Sometimes we will call \emph{punctures} the marked points that do not lie on a boundary curve. We will be interested in the combinatorics of arcs and triangulations of $\Sigma$. The arcs we consider are isotopy classes of simple arcs based at the marked points of $\Sigma$. A \emph{multiarc} is a union of distinct isotopy classes of arcs disjoint except for possibly in their endpoints. A \emph{triangulation} of $\Sigma$ is a maximal multiarc on $\Sigma$ (note that this definition is not standard everywhere). The triangulations we consider here are allowed to contain loops, multiple edges; in particular triangles may share more than a single vertex or a boundary arc. 

We denote by $\kappa(\Sigma)$ the number of arcs in (any) triangulation of $\Sigma$. The Euler characteristic tells us that $\kappa(\Sigma) = 6g +3b +3s +p-6$ where $g$ is the genus of $\Sigma$, $s$ is the number of punctures, $b$ is the number of boundary curves and $p$ is the number of marked points on the boundary curves. 
 
The \emph{modular flip graph} $\MF(\Sigma)$ is a graph whose vertices are triangulations of $\Sigma$ with vertices in the set of marked points of $\Sigma$ up to homeomorphism. The homeomorphisms we consider here preserve the set of marked points; in particular they fix the set of the labelled marked points pointwise and they are allowed to permute the unlabelled marked points. Two vertices of $\MF(\Sigma)$ are joined by an edge if the two underlying triangulations differ by exactly one arc; equivalently two triangulations are joined by an edge if they differ by a \emph{flip}, i.e. the operation of replacing one diagonal with the other one in a square.

\begin{figure}[h]
\leavevmode \SetLabels
\endSetLabels
\begin{center}
\AffixLabels{\centerline{\epsfig{file =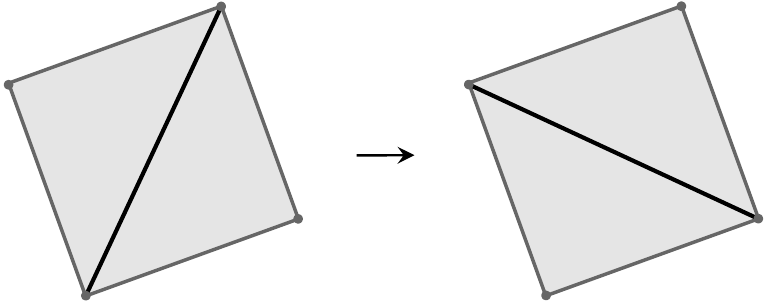,width=6.0cm,angle=0} }}
\vspace{-20pt}
\end{center}
\caption{A flip} \label{fig:flip}
\end{figure}

 An arc that can be flipped is called {\it flippable} and all arcs are flippable except those contained in a punctured monogon (see Figure \ref{fig:unflip})
 
\begin{figure}[h]
\leavevmode \SetLabels
\endSetLabels
\begin{center}
\AffixLabels{\centerline{\epsfig{file =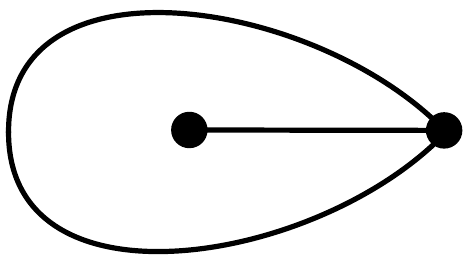,width=5.0cm,angle=0} }}
\vspace{-10pt}
\end{center}
\caption{The central arc is not flippable} \label{fig:unflip}
\end{figure}

 The modular flip graph $\MF(\Sigma)$ can also be described as the quotient of the flip graph of $\Sigma$ modulo the action of the mapping class group (see \cite{Disarlo-Parlier, KP2}). 

In this paper we will be interested in the \emph{modular simultaneous flip graph} $\MFS(\Sigma)$. This is also a graph whose vertices are the triangulations of $\Sigma$ up to homeomorphisms. Here two vertices are joined by an edge if the two underlying triangulations differ by a finite number of flips which are supported on disjoint quadrilaterals on $\Sigma$, i.e. a finite number of flips that can be performed simultaneously on $\Sigma$.

The following result, due to Bose, Czyzowicz, Gao, Morin and Wood, is Theorem 4.4 in \cite{Bose}. It is both a prototype for what we'll be exploring and a tool that we shall exploit. 

\begin{theorem}\label{lem:degreeincreasepolygon}
There exists a constant $K>0$ such that the following is true. Let $P_n$ be a polygon with $n$ vertices and $T,T'$ two triangulations of $P_n$. Then it is possible to relate $T$ to $T'$ in at most $K \log(n)$ simultaneous flips.
\end{theorem}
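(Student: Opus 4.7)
I would argue by comparing any triangulation $T$ of $P_n$ to a fixed canonical triangulation $T_0$ and showing that $T$ can be brought to $T_0$ in at most $(K/2)\log(n)$ simultaneous flips. The triangle inequality then yields the bound $K\log(n)$ between any two triangulations $T,T'$. The natural choice for $T_0$ is the fan triangulation at a distinguished vertex $v$; reaching $T_0$ from $T$ is then equivalent to driving $\deg_T(v)$ up to $n-1$.

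\textbf{The doubling step.} Given $T$ with $\deg(v) = d < n-1$, there are exactly $d-1$ triangles of $T$ incident to $v$, arranged cyclically around $v$. For each such triangle I consider its edge opposite $v$. This opposite edge is either a side of the polygon (an \emph{ear} at $v$, hence unflippable) or an interior diagonal. I would simultaneously flip all the non-ear opposite edges. The key geometric check is that the corresponding quadrilaterals have pairwise disjoint interiors: two adjacent quadrilaterals share only the $v$-edge sitting between the two triangles at $v$, while non-adjacent ones lie in disjoint sectors of the polygon. Hence the simultaneous flip is legal. Each individual flip replaces a non-$v$ edge by a new diagonal incident to $v$, so after one round the degree of $v$ becomes $d + f$, where $f$ is the number of non-ear opposite edges.

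\textbf{Exponential convergence.} The heart of the argument is that $f$ is at least a definite fraction of $d$, so $\deg(v)$ grows by a constant factor per round. Intuitively the $d-1$ triangles at $v$ partition the far side of the boundary into $d-1$ intervals summing to $n-2$; an ear is an interval of length one, and if ears dominate then most of $P_n$ is already fanned at $v$ and only small pockets remain to be processed. A careful counting of the sizes of these pockets, together with an amortized argument on the total ``non-fanned'' length, yields the claim that after $O(\log n)$ rounds the degree of $v$ equals $n-1$, i.e.\ $T = T_0$.

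\textbf{Main obstacle.} The delicate point is making the constant-factor growth rigorous when the ears of $v$ are many. Naively one may get only linear growth. The fix, which is the one used by Bose--Czyzowicz--Gao--Morin--Wood, is either to replace the fan by a balanced binary canonical triangulation and implement the transport by divide-and-conquer on half-polygons, or equivalently to track an amortized potential (such as the sum of $\log$-sizes of gaps between consecutive neighbors of $v$) that provably drops by a constant each round. Either way the resulting bound is of the form $K \log (n)$ with $K$ an absolute constant.
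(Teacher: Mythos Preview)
The paper does not give its own proof of this statement: it is quoted as Theorem~4.4 of Bose--Czyzowicz--Gao--Morin--Wood and used as a black box. So there is no ``paper's proof'' to compare against; the question is whether your sketch stands on its own.

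It does not. The strategy ``simultaneously flip all non-ear edges opposite $v$ until $v$ is fanned'' is genuinely $\Theta(n)$ in the worst case, and no amortisation rescues it. Take $P_n$ with vertices $0,1,\dots,n-1$, start from the fan at $0$, and set $v=1$. Initially $\deg(v)=2$ with the single triangle $(1,0,2)$; flipping $0\text{--}2$ gives $1\text{--}3$. Now the triangles at $v$ are $(1,0,3)$ and $(1,3,2)$; the second is an ear, so again $f=1$ and you flip $0\text{--}3$ to $1\text{--}4$. Inductively each round has exactly one non-ear triangle at $v$, so $\deg(v)$ increases by $1$ per round and you need $n-3$ rounds. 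Thus your claim that ``$f$ is at least a definite fraction of $d$'' is false, and the follow-up ``amortised argument on the non-fanned length'' cannot hold for this procedure either, since the example already pins the round count at $\Theta(n)$.

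You correctly identify in your ``Main obstacle'' paragraph that the fix is to abandon the fan as the canonical target and instead use a balanced (binary) canonical triangulation together with a divide-and-conquer transport on half-polygons. That is indeed the content of the Bose \emph{et al.} argument, but you only name it; you do not carry it out. As written, your proposal is a correct diagnosis plus a pointer to the literature, which is essentially what the paper itself does, but it is not a self-contained proof.
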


The constant $K$ is computable and in \cite{Bose} it is shown that $K$ can be taken less than $44$. In the sequel we won't be particularly concerned in optimizing constants as its the order of growth that we're really concerned with. However they all will be computable and we'll indicate exact upper bounds that follow from our methods.

An obvious consequence of the theorem stated above is the following. Given $T$ a triangulation of $P$, let $T_v$ be the unique triangulation of $P_n$ with maximal degree in $v$. Then the simultaneous flip distance between $T$ and $T_v$ is at most $K \log(n)$.   A result of this type is true in any context as stated in the following lemma. 
\begin{lemma}\label{lem:degreeincrease}
Let $v$ be a puncture on a surface $\Sigma$ and $T$ a triangulation of $\Sigma$. Then there exists a sequence of at most $H \log(\kappa(\Sigma))$ simultaneous flips such that the degree of $v$ is maximal. The constant $H$ can be taken equal to $100$. 
\end{lemma}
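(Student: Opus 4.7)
The plan is to reduce to the polygon case and invoke Theorem~\ref{lem:degreeincreasepolygon}. I would first choose a spanning tree $\mathcal{T}^*$ of the dual graph of $T$ and let $\mathcal{C}\subset T$ consist of those arcs whose dual edge is not in $\mathcal{T}^*$. Cutting $\Sigma$ along $\mathcal{C}$ produces a topological disk $P$ (a polygon) whose boundary edges come in pairs, one pair per arc of $\mathcal{C}$, and $T$ descends to a triangulation $\hat T$ of $P$. The puncture $v$ lifts to one or several corners of $\partial P$; fix one such lift $v^*$.

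Next I would apply Theorem~\ref{lem:degreeincreasepolygon} to the polygon $P$ with distinguished vertex $v^*$: in at most $K\log(|P|)$ simultaneous flips, $\hat T$ is transformed into the fan triangulation based at $v^*$, in which $v^*$ is joined by an arc to every other vertex of $P$ and, crucially, every triangle of the fan contains $v^*$ as a vertex. Each polygon flip takes place in a quadrilateral lying strictly in the interior of $P$ (it never touches $\partial P$, equivalently, it never involves an arc of $\mathcal{C}$), so the whole sequence lifts verbatim to a sequence of simultaneous flips on $\Sigma$. The resulting triangulation $T'$ on $\Sigma$ has the property that every triangle contains $v$ as a vertex. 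Since $|P|\le 3\kappa(\Sigma)$, the total cost is at most $K\log(3\kappa(\Sigma))$, which together with $K\le 44$ from~\cite{Bose} fits comfortably inside $H\log\kappa(\Sigma)$ for $H=100$.

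The main obstacle is verifying that $T'$ attains the genuine maximum of $\deg(v)$ over all triangulations of $\Sigma$, and not merely a local maximum. The point of entry is the observation that any triangulation realising the maximum must have $v$ as a vertex of every triangle --- otherwise a flip inside a triangle missing $v$ produces a new arc at $v$, contradicting maximality --- and this property is precisely what the polygon theorem enforces on $T'$. What remains is to control the multiplicities $m_\tau(v)$ (the number of corners of $\tau$ identified with $v$): from $\deg(v)=\sum_\tau m_\tau(v)$ together with the global identity $\sum_u\deg(u)=2\kappa(\Sigma)$, maximising $\deg(v)$ is equivalent to minimising $\sum_{u\neq v}\deg(u)$, and this is in turn controlled by how many corners of $P$ get identified with $v$ rather than with another marked point. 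I would therefore choose the spanning tree $\mathcal{T}^*$ so as to place as many $v$-copies as possible at ``middle'' positions of the fan based at $v^*$; making this counting rigorous is where the careful work lies, but once arranged, $T'$ meets the topological upper bound on $\deg(v)$ and the $O(\log\kappa(\Sigma))$ estimate is inherited directly from the polygon theorem, yielding the constant $H=100$ with room to spare.
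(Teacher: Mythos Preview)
Your reduction to the polygon case --- cut $\Sigma$ open along a suitable subset of the arcs of $T$ to obtain a single polygon $P$, then fan at a lift $v^*$ of $v$ --- is precisely the paper's first step, and the cost $K\log|P|$ is of the right order. The gap lies in your final paragraph.

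After the fan you have arranged that every triangle of $\Sigma$ has $v$ as a corner, but as you yourself observe this is strictly weaker than $\deg(v)$ being maximal: a triangle with corners $v,u,w$ (with $u,w\neq v$) still carries the arc $uw$, so $\deg(u)\geq 2$ and $\deg(v)<2\kappa(\Sigma)-(n-1)$. Your proposed remedy --- choose the dual spanning tree so that $\partial P$ carries many copies of $v$ --- cannot work in general. The vertices of $\partial P$ are the endpoints of the cut arcs, and the cut arcs are forced to lie in the \emph{given} triangulation $T$. If $\deg_T(v)$ is small (it can be as low as $2$), then no choice of spanning tree places more than a couple of copies of $v$ on $\partial P$, and the fan will unavoidably produce many triangles of type $v,u,w$. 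So optimising over spanning trees is a dead end, and the phrase ``making this counting rigorous is where the careful work lies'' is hiding not a technicality but a genuine obstruction.

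The paper's fix is to spend two further simultaneous flip moves after the fan rather than trying to be clever at the cutting stage. First: in each triangle with exactly one corner at $v$, the opposite side (the unique arc missing $v$) is flippable --- were it not, it would bound a punctured monogon and hence a triangle with no corner at $v$ --- and these opposite sides lie in pairwise disjoint quadrilaterals, so one simultaneous move flips them all. After this every arc has $v$ as an endpoint. A second simultaneous move then turns the remaining two-$v$-corner triangles into petals, reaching the genuine maximum. The total is $K\log(4g+2n-2)+2\leq 100\log\kappa(\Sigma)$.
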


\begin{proof}

When $\Sigma$ is a polygon this is a consequence of the previous theorem (with a better constant). We can thus suppose that $\Sigma$ has some topology.

We begin by cutting $\Sigma$ along a multiarc made of $2g +n -1$ arcs of $T$ such that the resulting surface is a connected polygon with $4g +2n -2$ sides. 

We now choose a copy of $v_0$ and apply Lemma \ref{lem:degreeincreasepolygon} to increase the degree until it's maximal within the polygon. This step requires at most $K\log(4g + 2n - 2)$ flips.

We now return to the full surface - note that every triangle now has $v_0$ as a vertex. With one simultaneous flip move we can ensure that every triangle has $v_0$ as two of its vertices. To do this consider a triangle with only one copy of $v_0$ as a vertex: exactly one of its three arcs does not have $v_0$ as an endpoint. This arc is flippable, otherwise it surrounds a monogon as in Figure \ref{fig:unflip} and thus there is a triangle without $v_0$ as any of its vertices. As such the triangles with the property of having an arc without $v_0$ as an endpoint come in pairs and form quadrilaterals together. These arcs can all be flipped simultaneously. 

Now it is not difficult to see that with a final simultaneous flip move we can ensure that all triangles have {\it only} $v_0$ as vertices or are what we'll call {\it petals} based in $v_0$. A petal is a triangle like in Figure \ref{fig:unflip} and its base is the exterior vertex. We thus have reached a desired triangulation as the degree is maximal in $v_0$. 

We can now quantify the procedure: the number of simultaneous flip moves is bounded above by
$$K\log(4g+2n-2)+2$$
Finally note that when $\kappa(\Sigma) \geq 2$ we have 
$$ K\log(4g+2n -2) + 2 \leq 100 \log (\kappa(\Sigma)) $$ and this completes the proof.  
\end{proof}

We recall that the \emph{intersection number} $i(a,b)$ between two arcs $a$ and $b$ is defined to be the minimum number of intersection points between two arcs in the classes of $a$ and $b$. 
The intersection number of two multiarcs $A, B$ is  defined as 
$$i(A,B) = \sum_{b \in B} \sum_{a \in A} i(a,b).$$ 
\begin{lemma}\label{lem:arcintroduction}
Let $a$ be an arc and $T$ a triangulation of $\Sigma$ such that $i(a,b)\leq 1$ for all $b\in T$. Then $T$ can be moved in at most $L \log(i(a,T)+1)$ simultaneous flips to a triangulation containing $a$, where $L$ can be taken equal to $100$.
\end{lemma}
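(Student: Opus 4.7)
The plan is to reduce the problem to the polygon case and invoke Theorem \ref{lem:degreeincreasepolygon}. Set $k = i(a,T)$ and let $c_1, \ldots, c_k$ be the arcs of $T$ crossed by $a$, listed in the order in which $a$ meets them; by hypothesis they are $k$ distinct arcs of $T$. The arc $a$ then passes through a sequence of triangles $T_0, T_1, \ldots, T_k$ of $T$ with $T_{i-1}$ and $T_i$ meeting along $c_i$. Gluing these abstract triangles along the $c_i$'s produces a $2$-complex $U$ with $k+1$ faces and $3(k+1)-k = 2k+3$ edges, which an Euler characteristic count identifies as a topological disk whose boundary is a polygon with $k+3$ edges and $k+3$ vertices. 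The arc $a$ becomes a chord of $U$ joining two of those boundary vertices.

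The next step is to apply Theorem \ref{lem:degreeincreasepolygon} to $U$. Both the triangulation of $U$ induced by $T$ and any triangulation of $U$ that contains $a$ as a diagonal are triangulations of a $(k+3)$-gon, so one can be obtained from the other in at most $K \log(k+3)$ simultaneous flips, where $K \leq 44$ from \cite{Bose}. Each such simultaneous flip must then be pushed forward to $\Sigma$. The hypothesis $i(a,b) \leq 1$ for every $b\in T$ is designed to ensure that the interior edges of $U$ correspond bijectively to arcs of $T$ (each $c_i$ is a genuine arc of $T$ crossed exactly once by $a$), so that disjoint quadrilaterals in $U$ project to disjoint quadrilaterals in $\Sigma$ and hence assemble into a bona fide simultaneous flip on the surface.

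For the constant, $k=0$ forces $a\in T$ and no flips are needed. For $k \geq 1$ the elementary inequality $\log(k+3) \leq 2\log(k+1)$ yields a bound of $2K \log(i(a,T)+1) \leq 100 \log(i(a,T)+1)$ simultaneous flips, so one may take $L = 100$.

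The delicate part, and what looks to be the main obstacle, is the transfer from the polygon $U$ to the surface $\Sigma$. One must verify that the abstract strip $U$ is consistently built from the local data of $T$ along $a$, that no two interior edges of $U$ are identified in $\Sigma$, and that disjointness of quadrilaterals is preserved by the push-forward. The hypothesis $i(a,b) \leq 1$ is exactly what makes each of these checks go through; once they are in place, the rest is the bookkeeping required to invoke Theorem \ref{lem:degreeincreasepolygon}.
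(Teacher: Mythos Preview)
Your proof is correct and follows essentially the same route as the paper's: assemble the triangles crossed by $a$ into a $(k+3)$-gon, observe that $a$ is a diagonal, and invoke Theorem~\ref{lem:degreeincreasepolygon}. You are in fact more careful than the paper about the transfer from the abstract polygon to $\Sigma$ and about the constant (the paper simply asserts $K\log(i(a,T)+3) < 100\log(i(a,T)+1)$ without the intermediate step).
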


\begin{proof}
Assume $i(a, T) \geq 1$. 
Consider the set of all triangles of $T$ through which $a$ passes. They can be assembled into a polygon $P$ and because $a$ only intersects a triangle once, $a$ is a diagonal of this polygon. The polygon has complexity $\kappa = i(a,T)$ by construction so has $i(a,T) + 3$ vertices.
Consider any triangulation $T_a$ of $P$ containing $a$: we now apply Lemma \ref{lem:degreeincreasepolygon} to pass from $T$ to $T_a$ in at most $K \log(i(a,T) + 3) < 100 \log(i(a,T) + 1)$ moves.
\end{proof}

\section{Punctured spheres}\label{sec:spheres}

In this section we focus our attention on finding upper bounds on simultaneous distance between triangulations of punctured spheres and disks with a single marked point on the boundary. 

We begin by proving the following theorem for $\Omega'_n$, a punctured disk with $n$ marked points inside and a single marked point on the boundary. 

\begin{theorem}\label{thm:diskupper}
There exists $A>0$ such that 
$\diam(\MFS(\Omega'_n)) < A (\log(n+1))^2$. The constant $A$ can be taken equal to $1000$. 
\end{theorem}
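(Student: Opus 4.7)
The overall strategy is a divide-and-conquer induction on $n$ yielding a recurrence of the form $f(n) \leq f(\lceil n/2 \rceil) + C \log n$, where $f(n) = \diam(\MFS(\Omega'_n))$ and $C$ is a universal constant. This recurrence solves to $f(n) = O((\log n)^2)$, and the explicit value $A = 1000$ is obtained by bookkeeping the constants $H = L = 100$ appearing in Lemmas \ref{lem:degreeincrease} and \ref{lem:arcintroduction}.

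Given triangulations $T, T'$ of $\Omega'_n$, the plan has four stages. First, apply an adaptation of Lemma \ref{lem:degreeincrease} at the boundary vertex $v_0$ to make $v_0$ of maximal degree in both $T$ and $T'$; each costs at most $100 \log(\kappa(\Omega'_n)) = O(\log n)$ simultaneous flips. In the resulting triangulations $T_1, T'_1$, every triangle is either a trigon (all three corners at $v_0$) or a petal based at $v_0$ enclosing a single interior puncture, so every arc is either a loop at $v_0$ or the central arc of such a petal. Second, exploit this nested-loop structure to choose a loop $a$ at $v_0$ separating the interior punctures into two groups of sizes $n_1, n_2 \leq \lceil n/2 \rceil$ and satisfying $i(a,b) \leq 1$ for every arc $b$ of the triangulations under consideration. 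Third, apply Lemma \ref{lem:arcintroduction} to introduce $a$, at a further cost of $O(\log n)$ simultaneous flips. Finally, once both endpoints of the flip path contain $a$, they restrict to triangulations on the two sub-disks bordered by $a$, and we invoke the induction hypothesis on each; since the sub-disks are disjoint subsurfaces of $\Omega'_n$, the two inner flip sequences can be executed in parallel, contributing only $\max(f(n_1), f(n_2)) \leq f(\lceil n/2 \rceil)$ to the total.

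The main obstacle is the second stage: ensuring that a single arc $a$ is simultaneously compatible with both max-degree triangulations $T_1$ and $T'_1$, since in general these two triangulations impose different cyclic orderings of the interior punctures around $v_0$, and a common $a$ may not automatically satisfy the intersection condition of Lemma \ref{lem:arcintroduction}. A natural workaround is to choose $a$ from the nested-loop tree of $T_1$, introduce it into $T_1$, and reach the target ``contains $a$'' configuration from $T'$ via an independent route (its own max-degree preprocessing followed by a separate application of Lemma \ref{lem:arcintroduction}); the two halves of the flip path are then concatenated rather than synchronized, preserving the overall bound. A related technical subtlety is that cutting $\Omega'_n$ along a loop at $v_0$ produces one sub-disk with a single boundary marked point and another with two copies of $v_0$ on its boundary, which likely forces a slight strengthening of the induction hypothesis to cover disks $\Omega'_{n,k}$ with several boundary marked points, a case that can be treated uniformly by exactly the same divide-and-conquer argument.
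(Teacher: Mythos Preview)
Your global architecture --- maximise the degree of $v_0$ via Lemma~\ref{lem:degreeincrease}, then split the punctures in half by a separating loop at $v_0$ and recurse --- is exactly the paper's. The recursion and the bookkeeping are fine. The gap is precisely where you flag it yourself: the introduction of the separating arc.

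You want to invoke Lemma~\ref{lem:arcintroduction} to force a chosen loop $a$ into $T'_1$, but that lemma has the hypothesis $i(a,b)\le 1$ for every $b\in T'_1$, and you never verify it. In fact it fails. After maximising the degree of $v_0$, the arcs of $T'_1$ include the petal boundaries (loops at $v_0$ enclosing a single labelled puncture) and the diagonals of the complementary $(n{+}1)$-gon (loops at $v_0$ enclosing a contiguous block of petals in the cyclic order determined by $T'_1$). A loop at $v_0$ enclosing a prescribed subset of punctures is \emph{not} unique up to isotopy in $\Omega'_n$ (its isotopy class remembers how it winds among the other punctures), so the loop $a$ you extract from $T_1$ and the petal/diagonal loops of $T'_1$ are genuinely unrelated arcs; when the two cyclic orders of petals differ, $i(a,b)$ can be arbitrarily large even for a single $b\in T'_1$. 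Your workaround (``reach the contains-$a$ configuration from $T'$ via its own max-degree preprocessing followed by Lemma~\ref{lem:arcintroduction}'') does not help: it is exactly this application of Lemma~\ref{lem:arcintroduction} whose hypothesis is in question.

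The paper sidesteps this entirely and never uses Lemma~\ref{lem:arcintroduction} here. After maximising at $v_0$, it works \emph{only inside the $(n{+}1)$-gon} complementary to the petals, using Theorem~\ref{lem:degreeincreasepolygon}. First it brings the polygon triangulation to a canonical fan; then a four-move simultaneous-flip sequence slides each petal labelled $v_1,\ldots,v_{\lfloor n/2\rfloor}$ one slot along the cyclic order, so that these petals become contiguous; a further polygon move (again Theorem~\ref{lem:degreeincreasepolygon}) then produces two loops $b,c$ forming a triangle with the boundary arc and enclosing the two halves. This is done to $T$ and $T'$ independently, and because the resulting $b,c$ enclose the \emph{same labelled sets} on both sides, change-of-coordinates in $\MFS$ lets one recurse into the two subsurfaces (each genuinely of type $\Omega'_k$, so no strengthened induction hypothesis is needed either). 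The missing idea in your proposal is this petal-regrouping step: rather than forcing a fixed arc into an uncooperative triangulation, one rearranges the petals so that the desired arc becomes a polygon diagonal.
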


\begin{proof}
Consider $T,T' \in \MFS(\Omega'_n)$ and denote $v_0$ the boundary vertex of $\Omega_n'$. 

We begin by flipping both $T$ and $T'$ until the degree of $v_0$ is maximal. By Lemma \ref{lem:degreeincrease} this step requires at most $H \log(\kappa(\Omega_n')) =  H \log(3n-2)$  moves for each triangulations. 

The result is a triangulation in which every puncture has an arc joining it to $v_0$ which in turn is surrounded by an arc. As previously, we call the unique triangle containing a given puncture a {\it petal} and the complement of the union of the petals is an $n+1$-gon with $n+1$ copies of $v_0$ as its vertices. 

\begin{figure}[h]
\leavevmode \SetLabels
\endSetLabels
\begin{center}
\AffixLabels{\centerline{\epsfig{file =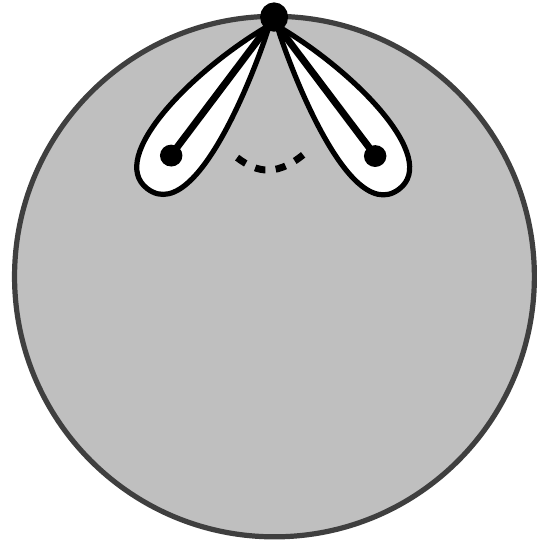,width=4.0cm,angle=0} }}
\vspace{-30pt}
\end{center}
\caption{The shaded area is triangulated (so arcs have both endpoints in $v_0$)} \label{fig:petals}
\end{figure}

For each of our two triangulations we'll now perform the same procedure. We begin by looking at the polygon - one of the edges corresponds to the boundary arc of $\Omega_n'$, say $a$. We give the vertices of the polygon a cyclic order with $p_0$ being on the left of $a$, and $p_{n}$ on the right. 

By Lemma \ref{lem:degreeincreasepolygon} any two triangulations of the polygon are at distance roughly $\log(n)$ apart and we'll use that to obtain a special type of triangulation. 
More precisely we move until the degree of $p_n$ is maximal. By Lemma \ref{lem:degreeincreasepolygon} this step takes at most $K \log(n+1)$ flips. 

\begin{figure}[h]
\leavevmode \SetLabels
\endSetLabels
\begin{center}
\AffixLabels{\centerline{\epsfig{file =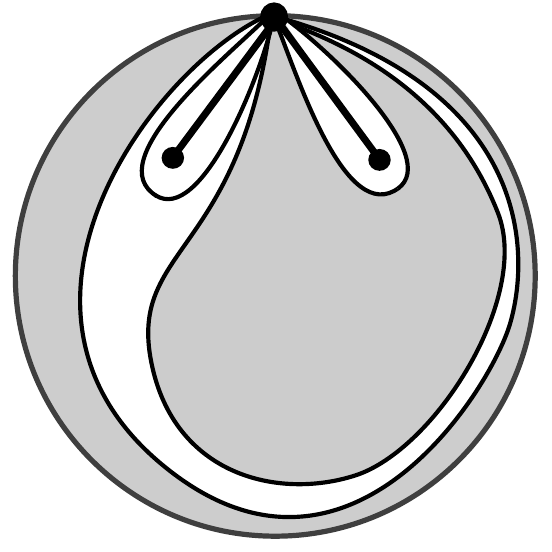,width=4.0cm,angle=0} }}
\vspace{-30pt}
\end{center}
\caption{The shaded areas are triangulated in the same fashion around each petal.} \label{fig:step1}
\end{figure}

We now return to the petals. Figure \ref{fig:step1} represents the result of the previous step around a petal. The goal is split the vertices into two groups, both surrounded by an arc: one with all vertices $v_1$ to $v_{\lfloor \frac{n}{2} \rfloor}$ and one group with the other ones. This can be done in two steps:

The first step takes two moves: flip (simultaneously) all arcs surrounding the petals containing vertices $v_1$ to $v_{\lfloor \frac{n}{2} \rfloor}$ and then flip all arcs between $v_0$ and $v_k$ for $k = 1,\hdots,\lfloor \frac{n}{2} \rfloor$. The result around an individual petal is illustrated in Figure \ref{fig:step2}. 

\begin{figure}[h]
\leavevmode \SetLabels
\endSetLabels
\begin{center}
\AffixLabels{\centerline{\epsfig{file =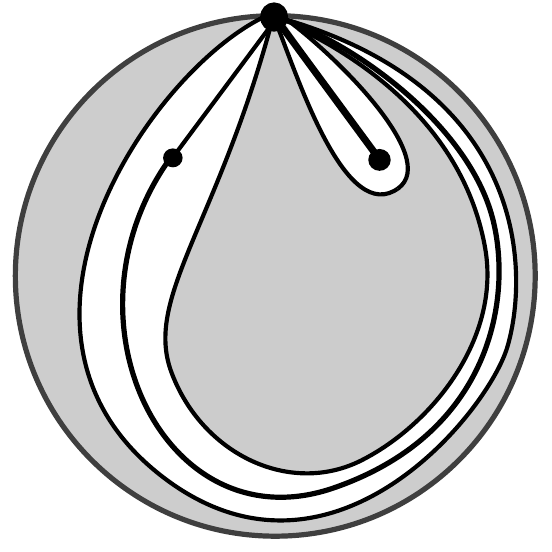,width=4.0cm,angle=0} }}
\vspace{-30pt}
\end{center}
\caption{Breaking the petal...} \label{fig:step2}
\end{figure}

We then flip symmetrically as in Figure \ref{fig:step3}. 

\begin{figure}[h]
\leavevmode \SetLabels
\endSetLabels
\begin{center}
\AffixLabels{\centerline{\epsfig{file =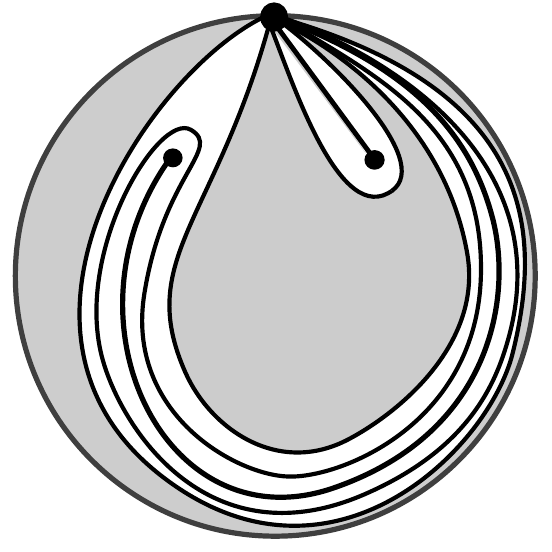,width=4.0cm,angle=0} }}
\vspace{-30pt}
\end{center}
\caption{...and building it else where.} \label{fig:step3}
\end{figure}

The result is again a triangulation with petals but this time the petals with vertices $v_1$ to $v_{\lfloor \frac{n}{2} \rfloor}$ are grouped together with respect to the left-right order.

The second step is to move in the polygon on complement of the petals to create a triangulation which contains two special arcs $b$, $c$: one that surrounds the petals containing $v_1$ to $v_{\lfloor \frac{n}{2} \rfloor}$ and the other that surrounds the remaining petals. Note that $a,b,c$ are the arcs of a triangle. What the rest of the triangulation looks like is irrelevant. By Lemma \ref{lem:degreeincreasepolygon} this step takes at most $K \log(n+1)$ flips.

Now we move (simultaneously) inside each arc $b$ and $c$ which surround resp. $\lfloor \frac{n}{2} \rfloor$ vertices and $n- \lfloor \frac{n}{2} \rfloor$ vertices. Denote by $\Omega_b, \Omega_c$ the two subsurfaces bounded by $b$ and $c$. 
By induction on $n$, the number of flips inside each of the two subsurfaces is at most 
$$ A \log^2(\lfloor \frac{n}{2} \rfloor+1) .$$  
The distance between $T$ and $T' $ is at most 
\begin{align*}
d(T, T') &\leq  A\log^2(\lfloor \frac{n}{2} \rfloor+1) + 2( 2 K\log(n+1) + H \log(3n-2))  + 4 \\ 
            & \leq A \log^2(n+1).
\end{align*}
A direct computation proves that when $A$ is large enough (for example $A =1000$) the last inequality holds for every $n \geq 1$. 
\end{proof}

From the theorem above it is easy to obtain the same type of result for a punctured sphere. 

\begin{theorem}\label{thm:sphereupper}
Let $\Omega_n$ be a sphere with $n$ labelled punctures. Then there exists $B>0$ such that
$\diam(\MFS(\Omega_n)) < B (\log(n))^2$, where $B$ can be taken to be equal to $1100$.
\end{theorem}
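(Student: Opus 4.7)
The plan is to reduce the sphere case to the punctured disk case treated in Theorem \ref{thm:diskupper}. Let $T, T'$ be two triangulations of $\Omega_n$, and fix two distinguished labelled punctures $v_0, v_1$ on $\Omega_n$. First apply Lemma \ref{lem:degreeincrease} to each of $T$ and $T'$ to bring the degree of $v_0$ to its maximum. Since $\kappa(\Omega_n) = 3n-6$, this costs at most $2H \log(3n-6) = 200 \log(3n-6)$ simultaneous flip moves in total.

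After this step, each triangulation has the petal structure around $v_0$ produced by Lemma \ref{lem:degreeincrease}: every puncture other than $v_0$ lies in a petal based at $v_0$. In particular, $v_1$ lies in a petal whose outer loop at $v_0$ bounds a disk on the sphere containing only $v_1$ among the punctures. Since the isotopy class of a simple loop at $v_0$ on $\Omega_n$ is determined by the partition of the remaining punctures it induces, the petal loop produced by the procedure applied to $T$ and the one produced by the procedure applied to $T'$ are the same arc (as isotopy classes). Hence both post-max-degree triangulations contain a common petal formed by a loop $a$ and the inner arc $e$ joining $v_0$ to $v_1$.

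Now cut $\Omega_n$ along $a$. This decomposes the sphere into the petal disk, which is triangulated identically in both by $\{e\}$, and an outer disk homeomorphic to $\Omega'_{n-2}$ with boundary loop $a$, one boundary marked point $v_0$ (the corner), and the remaining $n-2$ punctures in its interior. The restrictions of the two triangulations to this outer disk are triangulations of $\Omega'_{n-2}$, and any simultaneous flip on $\Omega'_{n-2}$ lifts to a simultaneous flip on $\Omega_n$ since the quadrilaterals involved are disjoint from the petal. Theorem \ref{thm:diskupper} then lets us move between these two disk triangulations in at most $1000(\log((n-2)+1))^2 = 1000(\log(n-1))^2$ simultaneous flips.

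Adding up, the total cost is at most
\[
200 \log(3n-6) + 1000(\log(n-1))^2,
\]
and a direct verification shows this is bounded by $1100(\log n)^2$ for every $n$ such that $\Omega_n$ admits a triangulation. The one subtle point is the identification of the petal loop across the two triangulations: we need the petal around $v_1$ to be canonically determined (up to isotopy) by the data of $v_0$ and $v_1$, which is exactly what the classification of simple loops at $v_0$ on a sphere by the partition of the other punctures gives us. The remaining work is a routine estimate of the two terms above to confirm that the constant $B = 1100$ suffices for all relevant $n$.
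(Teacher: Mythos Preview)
Your proof is correct and follows essentially the same route as the paper's: maximise the degree of a fixed vertex $v_0$, then use the petal around a second fixed labelled vertex to cut off a copy of $\Omega'_{n-2}$ and invoke Theorem~\ref{thm:diskupper}. Your justification that the petal loop is determined up to isotopy by the partition of punctures makes explicit a point the paper leaves implicit, and your use of $\kappa(\Omega_n)=3n-6$ is in fact the correct value (the paper's $3n-2$ appears to be a slip).
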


\begin{proof}
For $n\leq 3$ the result is immediate since $\Omega_n$ has at most 6 triangulations.  We will now assume $n \geq 4$. 

We could prove the theorem analogously to the previous theorem but for simplicity we'll use the previous result directly. 

Let's denote $v_0, \hdots, v_{n-1}$ the punctures of $\Omega_n$. Given two triangulations $T,T'$ we begin by flipping them to increase the valency of $v_0$ until it is maximal. By Lemma \ref{lem:degreeincrease} this step takes at most $2H\log(\kappa(\Omega_n)) = 2H \log(3n-2)$ moves.
As a result we obtain two triangulations, say $\tilde{T}, \tilde{T}'$, with all vertices with an unflippable arc joining it to $v_0$. Consider the petal surrounding $v_{n-1}$ - the complementary region to it is a triangulation of a disk with a single marked vertex (namely $v_0$) on its boundary and with $n-2$ interior vertices. Theorem \ref{thm:diskupper} tells us that $\tilde{T}$ and $\tilde{T}'$ are at most $A\log^2(n-1)$ apart. We thus have
\begin{align*}
d(T, T')  & \leq d(T, \tilde{T}) + d(T', \tilde{T}') + d(\tilde{T}, \tilde{T}') \\
& \leq 2H \log(3n-2) + A \log^2(n-1) \\
 & \leq  200 \log(3n-2)+1000 \log^2(n-1) \\ 
          &\leq B \log^2(n)
\end{align*}
A direct computation proves that when $B$ is large enough (for example any $B \geq 1100$ works) the last inequality holds for every $n\geq 4$.
\end{proof}

\begin{remark}
The case where the punctures of $\Omega_n'$ are unlabeled is easier. Consider $T, S$ in $\MFS(\Omega'_n)$ and denote $v_0$ the boundary vertex of $\Omega_n'$. We begin by flipping to increase the valence of $v_0$ until it is maximal. By Lemma \ref{lem:degreeincrease} this step requires at most $H \log(\kappa(\Omega_n'))= H \log(3n-2)$. Now up to homeomorphism the two triangulations differ only in a $n+1$-gon (the shaded area of figure \ref{fig:petals}). By Lemma \ref{lem:degreeincreasepolygon} the triangulations $T$ and $S$ differ by at most $$2 H \log(3n-2) + K \log(n+1) <  400 \log(n)$$ simultaneous flips. We have thus proved the following:
\begin{theorem}
Let $\Omega_n'$ be a disk with $n$ unlabelled punctures. 
There exists $B>0$ such that 
$\diam(\MFS(\Omega'_n)) < A\log(n)$, where $A$ can be taken equal to  $400$. 
\end{theorem}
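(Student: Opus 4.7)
The plan is to mirror the sketch already hinted at in the remark preceding the theorem: the unlabelled hypothesis lets the two triangulations be put into the same ``petal'' configuration at $v_0$, after which only the central polygon matters, and the two previous lemmas then finish the job.

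First, given two triangulations $T,S \in \MFS(\Omega_n')$, I would apply Lemma \ref{lem:degreeincrease} to each to maximise the degree of the boundary vertex $v_0$ in at most $H \log(\kappa(\Omega_n'))=H\log(3n-2)$ simultaneous flips per triangulation. Call the results $\tilde T$ and $\tilde S$. By the same description used in the proofs of Theorems \ref{thm:diskupper} and \ref{thm:sphereupper}, every puncture is now the apex of a petal (a triangle with two arcs from $v_0$ to the puncture, together with an arc based at $v_0$ enclosing the puncture), and the complement of the union of petals is an $(n+1)$-gon whose vertices are all copies of $v_0$.

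Next, I would use the fact that the $n$ punctures are unlabelled: a homeomorphism of $\Omega_n'$ is allowed to permute them. Consequently, up to the equivalence defining $\MFS(\Omega_n')$, the petal structure of $\tilde T$ is the same as that of $\tilde S$, since we can choose a representative homeomorphism sending the petals of $\tilde T$ to the petals of $\tilde S$ in any desired order. The only remaining combinatorial datum distinguishing $\tilde T$ from $\tilde S$ is the triangulation of the central $(n+1)$-gon. Applying Theorem \ref{lem:degreeincreasepolygon} inside this polygon relates the two polygonal triangulations in at most $K \log(n+1)$ simultaneous flips, and these flips only involve polygonal diagonals so are disjoint from all petal arcs and remain valid in $\Omega_n'$.

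Summing the three contributions yields an upper bound of
$$ 2H \log(3n-2) + K \log(n+1) $$
simultaneous flips between $T$ and $S$; with $H=100$ from Lemma \ref{lem:degreeincrease} and $K<44$ from \cite{Bose} a direct estimate shows this stays below $400 \log(n)$ (for small $n$, the graph is finite and one checks by hand). The only genuinely non-routine step is the second: identifying the petal structures through a permutation homeomorphism, which is precisely what makes the unlabelled case easier than the labelled one treated in Theorem \ref{thm:diskupper}.
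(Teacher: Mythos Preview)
Your proposal is correct and follows essentially the same approach as the paper's own argument (which is the paragraph of the remark immediately preceding the theorem statement): maximise the degree of $v_0$ in both triangulations via Lemma~\ref{lem:degreeincrease}, use the unlabelled hypothesis to identify the petal configurations up to homeomorphism, and then apply Theorem~\ref{lem:degreeincreasepolygon} in the remaining $(n+1)$-gon, arriving at the same bound $2H\log(3n-2)+K\log(n+1)<400\log(n)$. The only additions in your write-up are some welcome clarifications (why the petal identification is licit, and why the polygonal flips are compatible with the petals), but the strategy is identical.
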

\end{remark}

\begin{remark}
The above proof applies word-by-word for unlabeled punctured spheres $\Omega_n$. We thus have the following: 
\begin{theorem}
Let $\Omega_n$ be a sphere with $n$ unlabelled punctures. There exists $B>0$ such that 
$\diam(\MFS(\Omega_n)) < B \log(n)$, where $B$ can be taken equal to  $400$. 

\end{theorem}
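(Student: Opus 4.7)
The plan is to follow the argument of the preceding remark (the unlabelled-disk case) essentially verbatim, with only the straightforward combinatorial substitutions. The reason the argument simplifies compared with the labelled sphere (Theorem \ref{thm:sphereupper}) is that, after bringing a chosen puncture $v_0$ to maximal degree in both triangulations, the remaining freedom lives in a single polygonal complement; since homeomorphisms of $\Omega_n$ may permute the unlabelled punctures, we can identify the petals of the two triangulations pairwise without any recursion.

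Concretely, given $T,S\in\MFS(\Omega_n)$, I first pick any puncture $v_0$ and apply Lemma \ref{lem:degreeincrease} to each of $T$ and $S$. This uses at most $2H\log(\kappa(\Omega_n))=2H\log(3n-6)$ simultaneous flips in total, and produces triangulations in which every one of the other $n-1$ punctures is enclosed by a petal based at $v_0$. As in the proof of Theorem \ref{thm:diskupper}, the complement of these petals is a polygonal region whose boundary is formed by the outer loops of the petals; a short arc count shows it is an $(n-1)$-gon whose vertices are all copies of $v_0$.

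Next, because the punctures of $\Omega_n$ are unlabelled, any bijection between the petals of $T$ and those of $S$ can be realized by a homeomorphism of $\Omega_n$; up to homeomorphism, the two triangulations therefore differ only in the triangulation of this $(n-1)$-gon. Theorem \ref{lem:degreeincreasepolygon} lets us pass from one polygon triangulation to the other in at most $K\log(n-1)$ simultaneous flips.

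Combining the three steps gives a total cost bounded by
\[
2H\log(3n-6)+K\log(n-1)\;\le\;200\log(3n-6)+44\log(n-1),
\]
which a direct computation confirms is strictly less than $400\log n$ for every $n\ge 4$; the remaining cases $n\le 3$ are immediate since $\MFS(\Omega_n)$ then has only very few vertices. The only step that deserves some care is the ``up to homeomorphism'' identification of the petals, together with any cyclic rotation at $v_0$, which must be spelled out to ensure that comparing $T$ and $S$ really reduces to comparing two triangulations of a common $(n-1)$-gon; once this is in place, the bound is just the sum of the two lemma estimates.
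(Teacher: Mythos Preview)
Your argument is correct and is exactly the paper's approach: the paper's entire proof is the one-line remark that ``the above proof applies word-by-word for unlabeled punctured spheres $\Omega_n$,'' and you have carried out precisely that adaptation, with the correct substitutions ($\kappa(\Omega_n)=3n-6$ and an $(n-1)$-gon in place of the $(n+1)$-gon). One small caveat: the claim that \emph{any} bijection between the petals is realized by a homeomorphism is stronger than what holds (only dihedral permutations of the cyclic order arise), but you only need \emph{some} homeomorphism matching the two petal structures, which does exist, so the conclusion is unaffected.
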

\end{remark}

\section{Surfaces with genus}\label{sec:genus}

In this section we prove our upper bounds in terms of genus. 

For technical reasons we begin by proving a theorem for surfaces of genus $g$ with a single boundary component with a marked point on it.

\begin{theorem}\label{thm:upperboundarygenus}
Let $\Gamma'_g$ be a surface of genus $g$ with a single boundary component with a marked point on it. Then
$$
\diam (\MFS(\Gamma'_g)) < C  \left(\log(g+1) \right)^2
$$
where $C$ can be taken equal to $3000$.
\end{theorem}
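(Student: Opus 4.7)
The plan is to proceed by induction on the genus $g$, adapting the divide-and-conquer strategy from Theorem \ref{thm:diskupper} to the setting where complexity arises from genus rather than from interior punctures. The base case $g = 1$ is immediate: $\kappa(\Gamma'_1) = 4$, so the modular simultaneous flip graph has only finitely many vertices, and the conclusion holds provided $C$ is taken large enough.

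For the inductive step, given triangulations $T$ and $T'$ of $\Gamma'_g$, I would fix a separating simple loop $\gamma$ based at $v_0$ that bounds a sub-surface of genus $\lfloor g/2 \rfloor$. The crucial intermediate goal is to transform each of $T$ and $T'$ into a triangulation containing $\gamma$ using only $O(\log g)$ simultaneous flips. To do so, select a sub-multiarc of $2g$ arcs of $T$ whose complement is a polygon with $4g+1$ sides; this is always possible because $v_0$ is the unique vertex of $\Gamma'_g$. By Theorem \ref{lem:degreeincreasepolygon} applied to this polygon, one can reach in at most $K\log(4g+1)$ simultaneous flips any target triangulation of the polygon, in particular one whose re-gluing yields a triangulation of $\Gamma'_g$ containing $\gamma$ as an arc.

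Once $\gamma$ appears in both $T$ and $T'$, cutting along $\gamma$ splits the problem into two independent instances on sub-surfaces of genus at most $\lceil g/2 \rceil$ (possibly with a small constant number of extra boundary marked points, to be absorbed by slightly strengthening the inductive hypothesis). Because the two sub-surfaces are disjoint in $\Gamma'_g$, the two recursive flip sequences can be performed simultaneously. This yields the recursion $D(g) \leq D(\lceil g/2 \rceil) + O(\log g)$, which unfolds to $D(g) = O((\log g)^2)$; a careful accounting of the additive constants coming from Lemma \ref{lem:degreeincrease} and Theorem \ref{lem:degreeincreasepolygon} then yields the explicit $C = 3000$.

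The main technical obstacle is the introduction of $\gamma$: for the polygon-flip argument sketched above to actually produce $\gamma$ as an arc after re-gluing, the selected cut system of $T$ must be disjoint from $\gamma$, which may fail if the arcs of $T$ intersect $\gamma$ heavily. The natural remedy is a preliminary application of Lemma \ref{lem:arcintroduction}: after first flipping $T$ so that $i(\gamma, b) \leq 1$ for every $b \in T$, one brings $\gamma$ itself into the triangulation at a cost of $O(\log g)$ further simultaneous flips, which does not affect the asymptotic bound. A secondary issue, the accounting of boundary marked points created by cutting along $\gamma$, can be handled by stating the inductive hypothesis for surfaces of the form $\Gamma'_g$ with a bounded number of additional marked points on the boundary; this increases the constants but leaves the $(\log g)^2$ growth rate unchanged.
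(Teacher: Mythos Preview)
Your overall plan --- halve the genus, pay $O(\log g)$ simultaneous flips per level, recurse --- is exactly the paper's strategy, and the recursion $D(g)\le D(\lceil g/2\rceil)+O(\log g)$ does produce the $(\log g)^2$ bound. The genuine gap is the step where you introduce the separating arc $\gamma$.

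Your primary mechanism (choose $2g$ arcs of $T$ cutting $\Gamma'_g$ to a $(4g{+}1)$-gon, then flip inside the polygon) only reaches triangulations that still contain those $2g$ arcs, so it can produce $\gamma$ only if $\gamma$ is a diagonal of that polygon. This fails in general: for instance, if the cut system has gluing word $a_1 a_2\cdots a_{2g}\,a_1^{-1}a_2^{-1}\cdots a_{2g}^{-1}$ (plus the boundary arc), then no contiguous block of $4\lfloor g/2\rfloor$ sides is closed under the pairing, so the polygon has \emph{no} diagonal that becomes a genus-$\lfloor g/2\rfloor$ separating loop. You acknowledge this obstacle and propose as remedy to ``first flip $T$ so that $i(\gamma,b)\le 1$ for every $b\in T$'' and then apply Lemma~\ref{lem:arcintroduction}. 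But that preliminary step \emph{is} the whole difficulty, and you give no procedure or cost bound for it; Lemma~\ref{lem:arcintroduction} only helps once the intersection-one hypothesis is already in hand.

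The paper resolves this by reversing the order of choices. It does \emph{not} fix $\gamma$ in advance. It first applies Lemma~\ref{lem:degreeincrease} to make the degree of $v_0$ maximal, and then uses Lemmas~\ref{lem:uppergenus1} and~\ref{lem:uppergenus2} alternately to locate, \emph{inside the current triangulation}, $2\lfloor g/2\rfloor$ arcs whose removal drops the genus by $\lfloor g/2\rfloor$ while keeping a single boundary. The separating arcs $b,c$ are then taken parallel to the boundary of this cut-open surface; by construction they cross each arc of the triangulation at most once, so Lemma~\ref{lem:arcintroduction} applies directly at cost $O(\log g)$. (Using the pair $b,c$ forming a triangle with the boundary arc $a$, rather than a single loop $\gamma$, also makes both complementary pieces literally of type $\Gamma'_{\bullet}$, so no strengthened inductive hypothesis is needed.) What your proposal is missing is precisely this mechanism --- Lemmas~\ref{lem:uppergenus1} and~\ref{lem:uppergenus2}, or some substitute --- that guarantees the at-most-once intersection condition can be achieved in $O(\log g)$ moves.
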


We use a technique introduced in Disarlo-Parlier \cite{Disarlo-Parlier} and before proceeding to the proof, we state two lemmas we will need. Proofs can be found in Disarlo-Parlier \cite{Disarlo-Parlier} (Lemmas 4.4 and 4.5). 

\begin{lemma}\label{lem:uppergenus1}
Let $T$ be a triangulation of $\Lambda$, a genus $g\geq 1$ surface with a single boundary curve and $k$ marked points all on the boundary. Then there exists $a \in T$ such that $\Lambda \setminus a$ is connected and of genus $g-1$. 
\end{lemma}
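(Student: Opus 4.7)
The plan is to locate a non-separating arc in $T$ via a dual-graph cycle-rank argument, and then to verify via an Euler characteristic computation that cutting along any such arc must reduce genus by exactly one.

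First I would form the dual graph $G^*(T)$: its vertices are the triangles of $T$ and its edges correspond to the arcs of $T$, each arc being the shared side of the two triangles it bounds. This graph is connected because $\Lambda$ is. Denoting by $F$ the number of triangles of $T$, the standard counts $F = 4g + k - 2$ and $\kappa(\Lambda) = 6g + k - 3$ give
\[
\beta_1(G^*(T)) \;=\; \kappa(\Lambda) - F + 1 \;=\; 2g \;\geq\; 2.
\]
In particular $G^*(T)$ has positive cycle rank and therefore contains at least one edge which is not a bridge. Let $a \in T$ be the corresponding arc. Since removing this edge from $G^*(T)$ keeps the dual graph connected, the two triangles meeting along $a$ remain joined through a chain of other arcs, which is precisely the statement that $\Lambda \setminus a$ is connected.

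To identify the topology of $\Lambda \setminus a$ I would carry out a short local vertex-edge-face count at the endpoint(s) of $a$. Cutting along $a$ replaces $a$ by two boundary arcs, while each endpoint of $a$ is split into a constant number of new vertices; assembling these contributions yields
\[
\chi(\Lambda \setminus a) \;=\; \chi(\Lambda) + 1 \;=\; 2 - 2g.
\]
Moreover, both endpoints of $a$ lie on the single boundary circle of $\Lambda$, so cutting replaces that circle by at most two new boundary cycles, hence the number $b'$ of boundary components of $\Lambda \setminus a$ lies in $\{1,2\}$. Writing $\chi(\Lambda \setminus a) = 2 - 2g' - b'$ with $g' \in \Z_{\geq 0}$ and $b' \in \{1,2\}$, the unique integral solution is $g' = g - 1$ and $b' = 2$, which is exactly the conclusion of the lemma.

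The dual-graph existence argument is routine. The step requiring the most care is the local Euler characteristic count: one must handle both the case where $a$ is a loop based at a single marked point and the case where $a$ joins two distinct marked points, and check in each case that $\chi$ increases by exactly $1$ and that at most two boundary cycles are created when the single boundary circle is sliced at the endpoint(s) of $a$.
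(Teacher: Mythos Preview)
Your argument is correct. The paper does not actually prove this lemma here; it quotes the statement and refers to Lemmas~4.4 and~4.5 of \cite{Disarlo-Parlier} for the proof. So there is no ``paper's proof'' to match, and your dual-graph plus Euler-characteristic route is a clean self-contained substitute.

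A couple of small remarks on points you already flag as needing care. First, the cycle-rank step is fine even in the degenerate situation where an interior arc has the same triangle on both sides: this just produces a loop in $G^*(T)$, which is never a bridge, and cutting along such an arc still leaves the surface connected. Second, for the bound $b' \le 2$: when $p \neq q$ the new boundary is a $2$-regular graph on the four arcs $C_1, C_2, a', a''$ and four vertices $p',p'',q',q''$, none of the arcs being loops, so there are at most two cycles; when $p = q$ the left side of the loop $a$ closes up into a boundary circle by itself and the remaining two arcs form the other circle, so again $b' \le 2$. In either case the parity constraint $2g' + b' = 2g$ then forces $b' = 2$ and $g' = g-1$ exactly as you say. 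With these details filled in, the proof is complete.
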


\begin{lemma}\label{lem:uppergenus2}
Let $T$ be a triangulation of $\Lambda$, a genus $g\geq 0$ surface with two boundary curves, both with marked points, and all marked points on the boundary. Then there exists $a \in T$ such that $\Lambda \setminus a$ has only one boundary component.
\end{lemma}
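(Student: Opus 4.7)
The plan is to show that $T$ must contain an arc $a$ with one endpoint on each boundary component, and that cutting along any such arc merges the two components into a single boundary cycle.

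First, I would argue by contradiction that such an arc exists. Write $\partial_1$ and $\partial_2$ for the two boundary components, and suppose every arc of $T$ has both endpoints on the same $\partial_i$. Since by hypothesis every marked point lies on $\partial\Lambda$, every edge of the triangulation---whether it is an arc of $T$ or a segment of the boundary---then has both endpoints on a single $\partial_i$. Consequently, the three vertices of any triangle of $T$ all lie on the same $\partial_i$, and the triangles split into a collection $\mathcal{T}_1$ with all vertices on $\partial_1$ and a collection $\mathcal{T}_2$ with all vertices on $\partial_2$. Since $\partial_1$ and $\partial_2$ are disjoint, triangles from different collections share neither a vertex nor an edge, so the closed subsurfaces they tile are disjoint, nonempty, and have union $\Lambda$. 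This contradicts the connectedness of $\Lambda$, so at least one $a \in T$ must have its two endpoints on different components.

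Second, I would verify that cutting along such an $a$ reduces the number of boundary components to one. Let $v_1 \in \partial_1$ and $v_2 \in \partial_2$ be the endpoints. Cutting replaces $a$ by two parallel copies $a^+, a^-$ inserted along either side; following the new boundary starting from one duplicate of $v_1$, one traverses $\partial_1$ once back to the other duplicate of $v_1$, crosses along $a^-$ to $v_2$, traverses $\partial_2$ back to the other duplicate of $v_2$, and returns along $a^+$ to the starting point. This closes up into a single boundary cycle, so $\Lambda \setminus a$ has exactly one boundary component.

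The main obstacle is really the first step: one has to be sure that the assumption ``no arc crosses between the components'' genuinely forces a separation of $\Lambda$. The key input is that all marked points lie on $\partial \Lambda$, so no interior vertex can serve as a bridge between $\partial_1$-side and $\partial_2$-side triangles; combined with connectedness of $\Lambda$, this forces the desired contradiction. The boundary identification in step two is a standard cut-and-paste calculation.
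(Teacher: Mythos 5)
Your proof is correct. Note that the paper does not actually prove this lemma itself --- it defers to Disarlo--Parlier (their Lemma 4.5) --- so there is no in-text argument to compare with, but your two steps are exactly the standard route: if no arc of $T$ joined $\partial_1$ to $\partial_2$, then every triangle would have all its vertices on a single boundary circle (walking around its sides), and the two resulting unions of closed triangles are disjoint, closed, nonempty (each $\partial_i$ carries a marked point, hence is incident to a triangle) and cover $\Lambda$, contradicting connectedness; and cutting along an arc with one endpoint on each component merges $\partial_1$, $\partial_2$ and the two copies of the arc into a single boundary circle. The argument is valid in the paper's generalized setting (loops, multiple edges, folded triangles), since it only uses that each side of a triangle has both endpoints on one boundary circle and that triangle interiors avoid $\partial\Lambda$.
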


We can now proceed to the proof of the theorem.
\begin{proof}
The result can be checked directly for $g=1$. We need to check that the diameter is at most $2000 \log(2) > 5$. Indeed, a one-holed torus has  at most 5 possible triangulations so the result is true. 

Now suppose that $g \geq 2$. 

Denote by $v$ the boundary vertex of $\Gamma_g'$. Given triangulations $S,T$  of $\Gamma'_g$, flip both until the valence of $v$ is maximal and denote by $S_v, T_v$ the 
triangulations obtained. By Lemma \ref{lem:degreeincrease} each step takes at most $H \log(\kappa(\Gamma_g'))$ flips. Now we proceed as in the proof of Theorem 4.3 in Disarlo-Parlier \cite{Disarlo-Parlier}. 
We successively apply the previous lemmas to find a collection of $2 \lfloor \frac{g}{2} \rfloor$ arcs along which we can cut so that the resulting surface has genus $g-\lfloor \frac{g}
{2} \rfloor$ and a single boundary component with $1+ 4 \lfloor \frac{g}{2} \rfloor$ arcs. (Note that so far we haven't applied a single flip to either $S_{v}$ or $T_{v}$.)

Our aim is now to introduce two special arcs that are essentially parallel to the single boundary of the surface we've obtained by cutting along the arcs (see Figure \ref{fig:genusarc}).

\begin{figure}[h]
\leavevmode \SetLabels
\endSetLabels
\begin{center}
\AffixLabels{\centerline{\epsfig{file =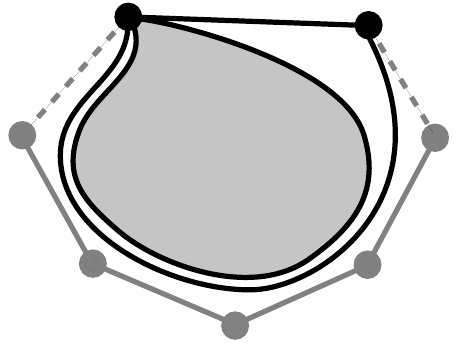,width=4.0cm,angle=0} }}
\vspace{-30pt}
\end{center}
\caption{The shaded region is of genus $g - \lfloor \frac{g}
{2} \rfloor$ } \label{fig:genusarc}
\end{figure}

We describe the process we'll apply to both triangulations $S_v,T_v$. If we consider the arc $a$ which is boundary arc of $\Gamma'_g$, note that the arcs $b,c$ we want to introduce 
form a triangle with $a$ and both cut off (of $\Gamma'_g$) resp. a surface $\Gamma_1'$ of genus $\lfloor \frac{g}{2} \rfloor$, resp. a surface $\Gamma_2'$ of genus $g- \lfloor \frac{g}{2} \rfloor$. 
They also have the nice property of intersecting any arc of the triangulation (either $S_v$ or $T_v$) at most once. In addition this means they intersect the full triangulation at most 
$\kappa(\Gamma_g')$ times. We can now appeal to Lemma \ref{lem:arcintroduction} from the preliminaries which tells us that we can introduce each of them in at most $L \log(\kappa(\Gamma_g')+1)= L \log(6g-1)$ moves. Denote by $S', T'$ the new triangulations obtained, they both contain the arcs $b,c$. Denote by $S_k', T_k'$ the restrictions of $S'$, reps. $T'$ to $\Gamma_k'$ for $k=1,2$. Now flip $S_k'$, $T_k'$ inside $\Gamma_k'$ for $k=1, 2$. Once the triangulations coincide on both $\Gamma_1'$ and $\Gamma_2'$, they will coincide on $\Gamma_g'$.    

By induction on $g$, the following holds:  
\begin{align*}
d(S_1',T_1') &\leq  C \log^2(\lfloor \frac{g}{2} \rfloor +1) \leq C \log^2( \frac{g}{2} +1) \\ 
d(S_2',T_2') &\leq  C \log^2(g- \lfloor \frac{g}{2} \rfloor +1) \leq C \log^2(\frac{g+1}{2} +1) \\
\end{align*}
Putting all together:
\begin{align*}
d(S,T) & \leq d(S,S') + d(T, T') + \max \{d(S_1', T_1') , d(S_2', T_2')\} \\
            & \leq 2 (L \log(6g-1) +H \log(6g-2)) + C \log^2(\frac{g+1}{2}  +1)  \\ 
            & \leq C \log^2(g +1) 
\end{align*}
A direct computation proves that the last inequality holds for every $g \geq 2$ when $C$ is large enough (for instance $C=3000$). 
\end{proof}

We can use the previous theorem to show the analogous result for a genus $g$ surface with a single puncture. 

\begin{theorem}\label{thm:uppergenus}
Let $\Gamma_g$ be a surface of genus $g$ with a single marked point. Then
$$
\diam (\MFS(\Gamma_g)) < C \left( \log(g+1) \right)^2
$$
The constant $C$ can be taken to be equal to $3000$.
\end{theorem}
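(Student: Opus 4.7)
The case $g=1$ can be dispatched by inspection: $\Gamma_1$ is a once-punctured torus and carries only a handful of triangulations up to homeomorphism, all of which are at bounded simultaneous flip distance from one another, so the bound $C(\log 2)^2$ with $C=3000$ comfortably covers this case. Accordingly I assume $g\ge 2$ and aim for a direct reduction to Theorem~\ref{thm:upperboundarygenus}.

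The valency-maximisation step that opens the proof of Theorem~\ref{thm:upperboundarygenus} is vacuous in the present setting: the puncture $v$ is the unique vertex of any triangulation of $\Gamma_g$ and in particular already carries the maximal possible degree $12g-6$. Given $S,T\in\MFS(\Gamma_g)$, I would work modulo homeomorphism to force a common reference arc. By the change-of-coordinates principle for surfaces, any two separating simple loops at $v$ of the same topological type are in the same mapping-class-group orbit, so one may choose representatives of $[S]$ and $[T]$ that both contain a separating simple loop $a$ at $v$ cutting $\Gamma_g$ into two subsurfaces of type $\Gamma'_{g_1}$ and $\Gamma'_{g_2}$ with $g_1=\lfloor g/2\rfloor$ and $g_2=g-\lfloor g/2\rfloor$. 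Once both triangulations contain $a$, cutting along $a$ splits the comparison of $S$ and $T$ into independent comparisons inside $\Gamma'_{g_1}$ and $\Gamma'_{g_2}$; Theorem~\ref{thm:upperboundarygenus} bounds the simultaneous flip distance in each piece by $C\log^2(\lceil g/2\rceil+1)$. Since the pieces are disjoint, the flips can be performed in parallel, so these two contributions combine as a maximum and not as a sum.

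The actual cost of installing $a$ in each triangulation is the substantive estimate. Following the template of Theorem~\ref{thm:upperboundarygenus}, I would first apply punctured-surface analogues of Lemmas~\ref{lem:uppergenus1} and~\ref{lem:uppergenus2} to reorganise $S$ and $T$ by cutting along suitable arcs, and then invoke Lemma~\ref{lem:arcintroduction} to insert $a$ at a cost of $L\log(\kappa(\Gamma_g)+1)$ simultaneous flips per triangulation. Summing the two stages gives
$$d(S,T)\le 2L\log(\kappa(\Gamma_g)+1)+C\log^2\!\bigl(\lceil g/2\rceil+1\bigr),$$
and the same elementary convexity computation as at the end of the proof of Theorem~\ref{thm:upperboundarygenus} shows that the right hand side is at most $C\log^2(g+1)$ for $C=3000$ and every $g\ge 2$.

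The main obstacle is the intersection-number hypothesis of Lemma~\ref{lem:arcintroduction}: the separating arc $a$ chosen from the mapping-class-group orbit need not initially cross each arc of the current triangulation at most once. My plan for handling this is the preliminary cutting step sketched above, which should pre-position each triangulation so that a separating arc of low intersection is exposed, exactly in the spirit of the reduction used in Theorem~\ref{thm:upperboundarygenus}. Should this route prove more delicate than anticipated, the fallback is to mimic the proof of Theorem~\ref{thm:upperboundarygenus} almost verbatim and run an independent induction on $g$ directly inside $\Gamma_g$, using punctured-surface versions of Lemmas~\ref{lem:uppergenus1} and~\ref{lem:uppergenus2}, rather than appealing to Theorem~\ref{thm:upperboundarygenus} as a black box.
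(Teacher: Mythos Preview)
Your plan is the paper's: use Lemmas~\ref{lem:uppergenus1}--\ref{lem:uppergenus2} to locate $2\lfloor g/2\rfloor$ cutting arcs already in the triangulation (no flips yet), so that the desired separating arc is parallel to the resulting boundary and meets every remaining arc at most once, then introduce it via Lemma~\ref{lem:arcintroduction} and apply Theorem~\ref{thm:upperboundarygenus} on each side. Your remark that the valency-maximisation step is vacuous here is correct, and the paper's short proof does not invoke it.

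One sentence in your write-up should be corrected: you cannot ``choose representatives of $[S]$ and $[T]$ that both contain a separating simple loop $a$'' of the target type, because a homeomorphism preserves the topological types of the arcs in a triangulation, and an arbitrary triangulation of $\Gamma_g$ need not contain any separating arc splitting off genus $\lfloor g/2\rfloor$. The change-of-coordinates principle is used only \emph{after} you have flipped $S\to S'$ and $T\to T'$ so that each contains a separating arc of the right type; those two arcs are then in the same mapping-class orbit and may be identified. Since you immediately treat installing $a$ as the substantive estimate, this is a presentation issue rather than a gap---just move the change-of-coordinates appeal after the installation step. Also note that Lemmas~\ref{lem:uppergenus1}--\ref{lem:uppergenus2} do not ``reorganise'' the triangulation; they merely locate arcs already present along which to cut, and no flips occur at that stage.
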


\begin{proof}
We argue as in the previous theorem by considering for any triangulation a collection of $2 \lfloor \frac{g}{2} \rfloor $ arcs that when cut along give a surface of genus $g- \lfloor \frac{g}{2} \rfloor$ with a single boundary component with $4 \lfloor \frac{g}{2} \rfloor$ arcs. 
As in the above proof, we can introduce an arc that separates the surface in two subsurfaces of genus $\lfloor \frac{g}{2} \rfloor$ and $g - \lfloor \frac{g}{2} \rfloor$. Then we apply the previous theorem to both to obtain the result.
\end{proof}
\section{Hybrid surfaces}\label{sec:hybrid}

In this section we prove our most general upper bound which works for surfaces with punctures and genus. 

\begin{theorem}\label{thm:upperhybrid}
Let $\Sigma_{g,n}$ be a surface of genus $g$ with $n$ labelled marked points. Then
$$
\diam (\MFS(\Sigma_{g,n})) < D \left( \log(g+n) \right)^2
$$
The constant $D$ can be taken equal to $4500$. 
\end{theorem}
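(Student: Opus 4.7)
The plan is to reduce to the two base cases already established: punctured disks (Theorem \ref{thm:diskupper}) and genus $g$ surfaces with one marked boundary point (Theorem \ref{thm:upperboundarygenus}). For the boundary cases $g=0$ and $n=1$, the bound follows directly from Theorems \ref{thm:sphereupper} and \ref{thm:uppergenus} respectively, so we can assume $g\geq 1$ and $n\geq 2$.

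Given $S,T\in \MFS(\Sigma_{g,n})$ and a chosen marked point $v_0$, first apply Lemma \ref{lem:degreeincrease} to $S$ and $T$ to obtain triangulations $\tilde S, \tilde T$ in which $v_0$ has maximal degree; this takes at most $2H\log(\kappa(\Sigma_{g,n}))$ simultaneous flips in total. In these triangulations each of the other $n-1$ punctures sits at the tip of a petal based at $v_0$, and the complement of the petals is a triangulated subsurface of genus $g$ with $n-1$ boundary loops, every triangle of which has all three vertices at $v_0$.

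The next step is to introduce in both $\tilde S$ and $\tilde T$ a separating loop $c$ based at $v_0$, bounding on one side a punctured disk $\Omega'_{n-1}$ containing all the petals, and on the other side a genus $g$ surface $\Gamma'_g$ with a single marked boundary point. The key topological claim is that $c$ can be chosen so that $i(c,b)\leq 1$ for every $b$ in the triangulation; granted this, Lemma \ref{lem:arcintroduction} lets us introduce $c$ in at most $L\log(\kappa+1)$ simultaneous flips per triangulation. Once both contain $c$, we may assume up to a homeomorphism fixing the labelled marked points that the very same $c$ appears in both, and the problem decouples into independent flip problems on the two subsurfaces. Applying Theorem \ref{thm:diskupper} on the disk sides and Theorem \ref{thm:upperboundarygenus} on the genus sides, and noting that the two subsurfaces are disjoint so the flips can be carried out simultaneously, contributes at most $\max\{A(\log n)^2,\ C(\log(g+1))^2\}$ additional simultaneous flips. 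Summing everything yields a bound of the form
\[
2H\log(\kappa) + 2L\log(\kappa+1) + \max\{A(\log n)^2,\ C(\log(g+1))^2\} \;\leq\; D(\log(g+n))^2,
\]
and a direct arithmetic check confirms that $D = 4500$ suffices for every $g\geq 1$, $n\geq 2$.

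The main obstacle is the construction of the separating loop $c$ with intersection at most $1$ with every triangulation arc. The plan is to unfold the genus region along $2g + (n-2)$ well-chosen triangulation arcs into a polygon, mimicking the cut used in the proof of Lemma \ref{lem:degreeincrease}; in this polygon the $n-1$ petal-boundary loops appear as single sides, which can be made consecutive by a bounded number of rearranging flips within the polygon. A diagonal of the resulting polygon separating the petal-boundary sides from the genus-identification sides then yields a loop $c$ that, once the identifications are undone, crosses each original arc at most once, meeting the hypothesis of Lemma \ref{lem:arcintroduction} and completing the reduction.
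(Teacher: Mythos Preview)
Your overall strategy matches the paper's: introduce a single separating loop based at $v_0$ that cuts $\Sigma_{g,n}$ into a copy of $\Gamma'_g$ and a copy of $\Omega'_{n-1}$, then invoke Theorems \ref{thm:diskupper} and \ref{thm:upperboundarygenus} on the two pieces. The discrepancy is entirely in how that separating loop is produced, and your version has a real gap there.

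You first maximise the degree of $v_0$ (creating petals) and then look for the loop $c$. In your final paragraph you cut the petal-complement open into a polygon and assert that the $n-1$ petal-boundary sides ``can be made consecutive by a bounded number of rearranging flips within the polygon.'' But flips performed inside a polygon only change interior diagonals; the cyclic order of the boundary sides is fixed by the topology of the cutting system, not by the interior triangulation. So no amount of flipping inside the polygon rearranges those sides. If instead you mean to alter the cutting arcs themselves, you have to explain why, after a controlled number of flips, there \emph{exists} a cutting system placing all petal-boundary loops consecutively; this is exactly the hard point, and nothing in your sketch addresses it. Concretely, after maximising the degree of $v_0$ the petals may be interleaved with the handles in an arbitrary cyclic pattern, and a loop $c$ enclosing all of them can be forced to cross some genus arcs more than once, so the hypothesis of Lemma \ref{lem:arcintroduction} is not available.

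The paper avoids this by reversing the order of operations. It takes a spanning tree of the $1$-skeleton of the \emph{original} triangulation (so the punctures are already grouped), cuts along it to get a genus-$g$ surface $\Sigma^\beta$ with a single $(2n-2)$-gon boundary, and only \emph{then} maximises the degree of one boundary copy of $v_0$ inside $\Sigma^\beta$. After that step the boundary-parallel loop $a$ automatically meets every arc at most once, so Lemma \ref{lem:arcintroduction} applies directly. This costs the same $(H+L)\log\kappa$ per triangulation that you budget, but with the intersection condition genuinely verified rather than asserted.
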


\begin{proof}
Consider a triangulation of $\Sigma:=\Sigma_{g,n}$ and a spanning tree of its $1$-skeleton. Note that a spanning tree contains exactly $n-1$ arcs. Consider a marked vertex $v_0$ and the loop $a$ based in 
$v_0$ obtained by leaving from $v_0$ and following the spanning tree along an arc (leaving the spanning tree to the left say) and going around the entire tree before returning to $v_0$. 

\begin{figure}[h]
\leavevmode \SetLabels
\L(.4*.92) $a$\\
\L(.46*.38) $v_0$\\
\endSetLabels
\begin{center}
\AffixLabels{\centerline{\epsfig{file =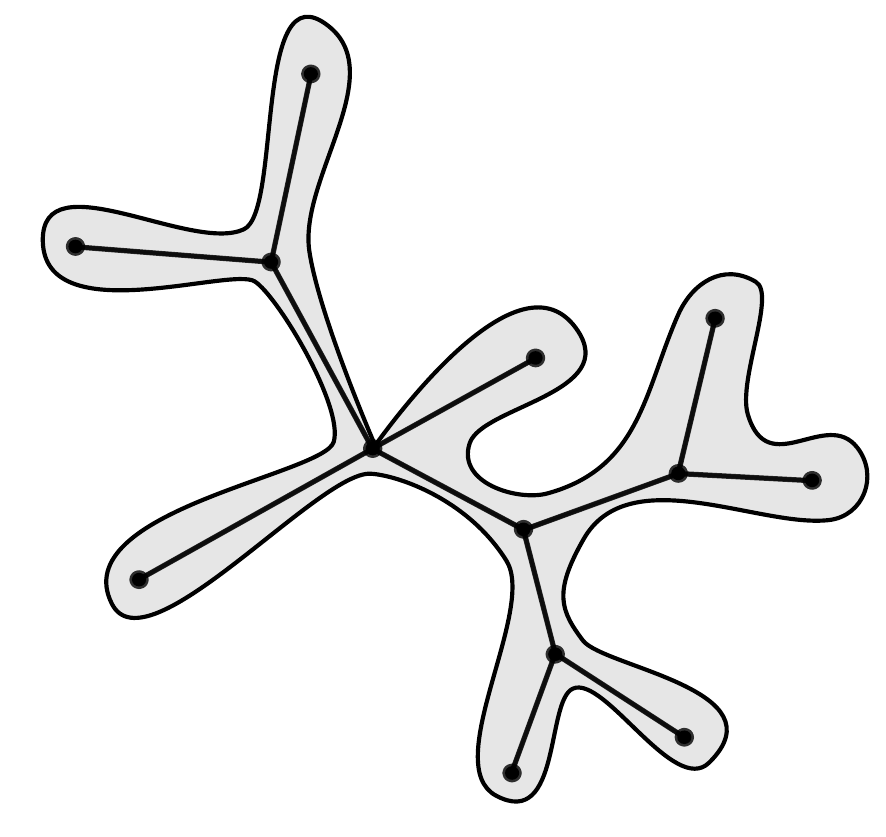,width=6.0cm,angle=0} }}
\vspace{-30pt}
\end{center}
\caption{A spanning tree of the vertices and the arc $a$}\label{fig:spantree}
\end{figure}

The arc $a$ is separating and leaves the genus to one side and the punctures to other (except for the point $v_0$ which lies on the arc itself). We claim that it can be introduced in the triangulation in at most $(H+L) \log(\kappa(\Sigma))$ moves.  

To do so one can proceed as follows. Cutting along the arcs of the spanning tree we find a surface $\Sigma^{\beta}$ of genus $g$ and a single polygonal boundary component $\beta$ with all marked points now on the boundary. A marked point of degree $d$ in the spanning tree appears on the boundary component $\beta$ exactly $d$ times and $\beta$ is a polygon of $2n-2$ arcs (twice the number of arcs of the spanning tree).

Note that the arc $a$ also lives on $\Sigma^{\beta}$ and is a loop parallel to $\beta$ with its basepoint a copy of $v_0$. We now flip the restriction of the triangulation to increase the valence of the basepoint of $a$ until it is maximal. By Lemma \ref{lem:degreeincrease} this step requires at most $H\log(\kappa(\Sigma) - (n-1)) < H\log(\kappa(\Sigma)) $ simultaneous flips. The arc $a$ now intersects any arc in the triangulation at most once and thus by Lemma \ref{lem:arcintroduction} can be introduced in at most $L \log(\kappa(\Sigma))$ moves. 

This can be done to any triangulation so now considering two triangulations $T$ and $S$, we perform the above process on both. The new triangulations obtained, say $S'$ and $T'$, possibly differ in ``the genus part" $\Gamma_g'$ or the ``puncture part" $\Omega_{n-1}'$ but by applying Theorems \ref{thm:diskupper}, \ref{thm:upperboundarygenus}  from before, we can conclude that they lie at distance at most 
\begin{align*}
d(S,T) & \leq d(S, S') + d(T, T') + d(S', T') \\  
& \leq 2  (H+L) \log(\kappa(\Sigma)) + \max \{\diam (\MFS(\Gamma_g')) , \diam (\MFS(\Omega_{n-1}'))\} \\ 
& \leq 2 (H+L) \log(6g+3n-6) + C \log^2(g+n) \\ 
& \leq D \log^2(g+n).
\end{align*}
A direct computation proves that the last inequality holds for every $g, n$ such that $g+n\geq 2$ provided that $D$ is large enough (for example, $D=4500$).
\end{proof}

\begin{remark} We can apply the same proof as above to the case of a surface with unlabelled marked points. One has to be careful because in the above estimates, we are trying to capture the cases where both the genus and number of points are increasing, possibly at different rates. Our previous upper bounds for spheres with unlabelled marked points grows $\log(n)$; in combination with the above proof this implies that for fixed genus, one can again obtain an upper bound on the order of $\log(n)$ with an additive constant that depends on the genus. Again, all constants can be made explicit but for simplicity we won't discuss this in detail. 
\end{remark}

\section{Lower bounds and further questions}\label{sec:lower}

For surfaces with genus and labelled marked points, our upper bounds grow roughly like $(\log(\kappa))^2$ in $\kappa$ the complexity of the surface. It is not clear where this order of growth is optimal.

An immediate lower bound can be deduced from known bounds on the diameters of the usual flip graphs. In those cases lower (and upper) bounds are known to grow like $g \log(g) + n \log(n)$ (see Theorem 1.4 and Corollary 4.19 of \cite{Disarlo-Parlier}). As at most a linear number of flips in terms of the complexity can be performed simultaneously, this implies a lower bound on the order of $\log(\kappa)$. The counting argument used to provide this bound is pretty simple, especially compared to our upper bounds, and it does not seem particularly adapted for simultaneous flips. In terms of unlabelled marked points, the {\it same} order of growth holds for these lower bounds. It would seem surprising that there is no difference in order of growth between labelled and unlabelled marked points. All of these points seem to indicate that perhaps a better lower bound might be achievable.

On the other hand, there are some indications that an upper bound on the order of $\log(\kappa)$ might be possible. A seemingly related problem to estimating distances in the flip graph is the problem of estimating distances between $3$-regular graphs using Whitehead moves. These graphs are dual to a triangulation and a flip on a triangulation corresponds to a Whitehead move. Triangulations are  really different though; first of all they really correspond to ribbon graphs and not $3$-regular graphs. Secondly only certain Whitehead moves on a $3$-regular graph can be emulated by flips. In particular, it's not possible to deduce results about flip distances from estimates on Whitehead moves or vice-versa. But although the relationship is not direct, there have been a number of recent results that seem to indicate similar behaviors. The $\kappa \log(\kappa)$ behavior discussed previously for modular flip graphs is also present for Whitehead moves on graphs (see for instance \cite{Cavendish, Cavendish-Parlier}). Simultaneous flip moves are thus related to simultaneous Whitehead moves and Rafi and Tao have shown that the growth for graphs behaves like $\log(\kappa)$. This seems to indicate that perhaps our upper bounds might be improvable. A further indication that this order of growth might be correct are the results in \cite{Bose} that were among the tools needed for our upper bounds.

In short, we now know that the rough behavior in terms of either the genus or number of labelled marked points is bounded below and above by a function of type $\log(\kappa)^\alpha$ for $\alpha \in [1,2]$ and determining the exact behavior might be an interesting problem.

\addcontentsline{toc}{section}{References}
\bibliographystyle{amsplain}
\bibliography{Simultaneous.bib}

\providecommand{\bysame}{\leavevmode\hbox to3em{\hrulefill}\thinspace}
\providecommand{\MR}{\relax\ifhmode\unskip\space\fi MR }
\providecommand{\MRhref}[2]{%
  \href{http://www.ams.org/mathscinet-getitem?mr=#1}{#2}
}
\providecommand{\href}[2]{#2}
\begin{thebibliography}{10}

\bibitem{Bose}
Prosenjit Bose, Jurek Czyzowicz, Zhicheng Gao, Pat Morin, and David~R. Wood,
  \emph{Simultaneous diagonal flips in plane triangulations}, J. Graph Theory
  \textbf{54} (2007), no.~4, 307--330.

\bibitem{Cavendish}
William Cavendish, \emph{Growth of the diameter of the pants graph modulo the
  mapping class group}, Preprint (2011).

\bibitem{Cavendish-Parlier}
William Cavendish and Hugo Parlier, \emph{Growth of the {W}eil-{P}etersson
  diameter of moduli space}, Duke Math. J. \textbf{161} (2012), no.~1,
  139--171.

\bibitem{Disarlo-Parlier}
V.~{Disarlo} and H.~{Parlier}, \emph{{The geometry of flip graphs and mapping
  class groups}}, ArXiv e-prints (2014).

\bibitem{Disarlo}
Valentina Disarlo, \emph{Combinatorial rigidity of arc complexes}, ArXiv
  e-prints (2015).

\bibitem{KP2}
M.~Korkmaz and A.~Papadopoulos, \emph{On the ideal triangulation graph of a
  punctured surface}, Ann. Inst. Fourier \textbf{62} (2012), no.~4, 1367--1382.

\bibitem{Pournin}
Lionel Pournin, \emph{The diameter of associahedra.}, Adv. Math. \textbf{259}
  (2014), 13--42.

\bibitem{STT1}
Daniel~D. Sleator, Robert~E. Tarjan, and William~P. Thurston, \emph{Rotation
  distance, triangulations, and hyperbolic geometry}, J. Amer. Math. Soc.
  \textbf{1} (1988), no.~3, 647--681.

\bibitem{Stasheff}
James~Dillon Stasheff, \emph{Homotopy associativity of {$H$}-spaces. {I},
  {II}}, Trans. Amer. Math. Soc. \textbf{108} (1963), 293--312.

\bibitem{Tamari}
Dov Tamari, \emph{Mono\"\i des pr\'eordonn\'es et cha\^\i nes de {M}alcev},
  Th\`ese, Universit\'e de Paris, 1951.

\end{thebibliography}

{\em Addresses:}\\
Department of Mathematics, University of Fribourg, Switzerland\\
{\it and} Hunter College, City University of New York, NY, USA \\
Indiana University, Bloomington IN, USA \\
{\em Emails:} \href{mailto:hugo.parlier@unifr.ch}{hugo.parlier@unifr.ch}, \href{mailto:valentina}{vdisarlo@indiana.edu}\\

\end{document}